\documentclass[12pt]{article}
\usepackage[utf8]{inputenc}
\usepackage{amsmath}
\usepackage{amsfonts}
\usepackage{amsthm}
\usepackage[hang, flushmargin]{footmisc}

\def\modd#1 #2{#1\ \mbox{\rm (mod}\ #2\mbox{\rm )}}

\theoremstyle{plain}
\newtheorem{theorem}{Theorem}
\newtheorem{corollary}[theorem]{Corollary}
\newtheorem{lemma}[theorem]{Lemma}
\newtheorem{proposition}[theorem]{Proposition}

\theoremstyle{definition}

\newtheorem{example}[theorem]{Example}
\newtheorem{conjecture}[theorem]{Conjecture}

\theoremstyle{remark}

\providecommand{\keywords}[1]{\small \textit{Keywords:} #1}

\providecommand{\MSC}[1]{\small \textit{MSC 2020:} #1}

\newcommand\blfootnote[1]{%
  \begingroup
  \renewcommand\thefootnote{}\footnote{#1}%
  \addtocounter{footnote}{-1}%
  \endgroup
}

\begin{document}

\begin{center}
\vskip 1cm{\LARGE\bf 
A Perfect Number Generalization  \\
\vskip .1in
and Some Euclid-Euler Type Results}
\vskip 1cm
\large
Tyler Ross\\
School of Mathematical Sciences\\
Zhejiang University\\
Hangzhou\\
310058 China\\
tylerxross@gmail.com \blfootnote{\noindent This project was supported by the Natural Science Foundation of China (Grant No.\ 12071421). } 
\end{center}

\begin{abstract}
    In this paper, we introduce a new generalization of the perfect numbers, called $\mathcal{S}$-perfect numbers. Briefly stated, an $\mathcal{S}$-perfect number is an integer equal to a weighted sum of its proper divisors, where the weights are drawn from some fixed set $\mathcal{S}$ of integers. After a short exposition of the definitions and some basic results, we present our preliminary investigations into the $\mathcal{S}$-perfect numbers for various special sets $\mathcal{S}$ of small cardinality. In particular, we show that there are infinitely many $\{0, m\}$-perfect numbers and $\{-1,m\}$-perfect numbers for every $m \geq 1$. We also provide a characterization of the $\{-1,m\}$-perfect numbers of the form $2^kp$ ($k \geq 1$, $p$ an odd prime), as well as a characterization of all even $\{-1, 1\}$-perfect numbers.
\end{abstract}
\keywords{perfect numbers, semiperfect numbers, hyperperfect numbers, abundant numbers}
\\
\MSC{11A25, 11A67, 11Y55}

\section{Introduction} \label{sec1}

A positive integer $n > 1$ is called a \textit{perfect number} if it is equal to the sum of its proper divisors; symbolically, if
\begin{equation}\nonumber
n=\sum_{\substack{1 \leq d < n \\ d|n}}d. 
\end{equation}
It has been known since Euclid that any number of the form $n=2^{p-1}(2^p-1)$ where both $p$ and $2^p-1$ are prime is perfect. Centuries later, Euler proved the converse: if any $n > 1$ is an even perfect number, then it is of the form $n=2^{p-1}(2^p-1)$ with both $p$ and $2^p-1$ prime. On the other hand, it is not known if there exist infinitely many perfect numbers, or if there exists even a single odd perfect number. To date, fifty-one even perfect numbers have been found, many by computer search, the largest of which corresponds to the prime number $p = 82589933$ (see \cite{article2} for a survey of the current state of research). The literature has been broadened by the introduction of various generalizations, to which this paper adds another, encompassing many of those previously put forward.

Let $\mathcal{S} \subset \mathbb{Z}$ be any collection of integers, and let $n \in \mathbb{Z}$ with $|n| > 1$. Then we call $n$ an \textit{$\mathcal{S}$-perfect number of the first kind} if there exist integers $\lambda_1, \ldots, \lambda_k \in \mathcal{S}$ such that
\begin{equation}\nonumber
    1+\sum_{j=1}^{k}{\lambda_{j}d_{j}}=n  ,
\end{equation}
where $1 = d_0 < d_1 < \cdots < d_k < d_{k+1}=|n|$ are the positive divisors of $n$. We call $n$ an \textit{$\mathcal{S}$-perfect number of the second kind} if there exist integers $\lambda_0, \ldots, \lambda_k \in \mathcal{S}$ such that
\begin{equation}\nonumber
    \lambda_{0}+\sum_{j=1}^{k}{\lambda_{j}d_{j}}=n  .
\end{equation} 
If $n$ is an $\mathcal{S}$-perfect number, we refer to the sum $n=1+\sum\limits_{j=1}^{k}{\lambda_{j}d_{j}}$ (respectively $n=\lambda_0+\sum\limits_{j=1}^{k}{\lambda_{j}d_{j}}$) as an \textit{$\mathcal{S}$-presentation} of $n$, or simply a \textit{presentation} of $n$ when $\mathcal{S}$ is fixed.  

Throughout this paper we will limit our investigation to positive $\mathcal{S}$-perfect numbers of the first kind unless otherwise indicated. We prove that there are infinitely many $\{0, m\}$-perfect numbers and $\{-1,m\}$-perfect numbers for every $m \geq 1$. We also provide a characterization of the $\{-1,m\}$-perfect numbers of the form $2^kp$ ($k \geq 1$, $p$ an odd prime), as well as a characterization of all even $\{-1.1\}$-perfect numbers. The symbols $\tau$ and $\sigma$ indicate the familiar arithmetic functions
\nonumber \begin{align} 
    \tau(n) &= \sum_{d|n}1, \\
    \sigma(n) &= \sum_{d|n}d. 
\end{align}
\begin{example}The $\mathcal{S}$-perfect numbers generalize the perfect numbers ($\mathcal{S}=\{1\}$), as well several other previously defined generalizations of the perfect numbers. The $\{1,0\}$-perfect numbers (of the second kind) are the semiperfect numbers. For $k  \geq 1$, the $\{k\}$-perfect numbers of the first kind were introduced by Minoli and Bear \cite{article7} as $k$-hyperperfect numbers, and subsequently also studied by te Riele \cite{article9} and McCranie \cite{article5}. The $\{-k\}$-perfect numbers are integers $n < 0$ satisfying 
\begin{equation}\nonumber
    \sigma(|n|)=\frac{(k+1)(|n|+1)}{k}  .
\end{equation}
Such numbers have also been the object of some interest, for example in Guy \cite{article4} ($k=1$) and Bege and Fogarasi \cite{article1} ($k=2$).
\end{example}
\begin{example}
We list here the first few $\mathcal{S}$-perfect numbers for various small $\mathcal{S}$.
\begin{itemize}
    \item $\mathcal{S}=\{1\}:$ $6, 28, 496, 8128, 33550336, \ldots$ (perfect numbers, OEIS A005101)
    \item $\mathcal{S}=\{1,0\}$ (second kind)$:$ $6, 12, 18, 20, 24, 28, 30, 36, 40, 42, 48, 54, 56, 60, 66 \ldots$ (semiperfect numbers, OEIS A005835); the smallest semiperfect number that is not a $\{1,0\}$-perfect number of the first kind is $66$.
    \item $\mathcal{S}=\{2\}$$:$ $21, 2133, 19521, 176661, \ldots$ ($2$-hyperperfect numbers, OEIS A007593)
    \item $\mathcal{S}=\{3\}$$:$ $325, \ldots$ ($3$-hyperperfect numbers)
    \item $\mathcal{S}=\{0,2\}:$ $21, 63, 147, 171, 189, 225 \ldots$ (see Section \ref{sec2})
    \item $\mathcal{S}=\{-1,2\}$: $21, 28, 52, 84, 112, 156, 189, 208, 228, \ldots$ (see Section \ref{sec2}) 
    \item $\mathcal{S}=\{-1,1\}$$:$ $6, 12, 24, 28, 30, 40, 42, 48, 54, 56, 60, 66, 70, 78, 80, \ldots$ (see Section \ref{sec3})
    \item $\mathcal{S}=\{1,2\}$$:$ $6, 10, 21, 28, 44, 45, 50, 52, 99, 105, 117, 135, 136, \ldots$
    \item $\mathcal{S}=\{1,3\}$$:$ $6, 14, 15, 28, 44, 76, 110, 135, 152, 182, 184, 190, 231, \ldots$
    \item $\mathcal{S}=\{2,3\}$$:$ $21, 175, 325, 333, \ldots$.
\end{itemize}
\end{example}

The following proposition shows that for most integers $n > 1$ it is easy to find a set $\mathcal{S} \subset \mathbb{Z}$ such that $n$ is $\mathcal{S}$-perfect. For this reason we focus our discussion mainly on determining the $\mathcal{S}$-perfect numbers and related properties for fixed $\mathcal{S}$. This is somewhat at odds with the literature on $k$-hyperperfect numbers, in which the term hyperperfect number is used generically to refer to any integer that is $k$-hyperperfect for some $k \geq 1$.

\begin{proposition}
If $n \in \mathbb{Z}$ ($|n| > 1$) has at least two prime factors, then there exists a finite set $\mathcal{S} \subset \mathbb{Z}$ with $\#\mathcal{S} \leq \tau(n)-2$ such that $n$ is $\mathcal{S}$-perfect. If $n \in \mathbb{Z}$ is a prime power, then $n$ is not $\mathcal{S}$-perfect for any $\mathcal{S} \subset \mathbb{Z}$.
\end{proposition}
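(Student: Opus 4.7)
The plan is to reduce the proposition to a Bézout-type statement about the proper divisors of $n$. Writing the defining relation for an $\mathcal{S}$-perfect number of the first kind as
\begin{equation}\nonumber
\sum_{j=1}^{k}\lambda_{j}d_{j} = n-1,
\end{equation}
where $d_{1},\ldots,d_{k}$ are the proper divisors of $n$ greater than $1$ (so $k=\tau(n)-2$), the question becomes: can $n-1$ be written as an integer linear combination of $d_{1},\ldots,d_{k}$, and if so, how few distinct coefficients suffice?

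For the prime-power statement, I would note that if $n=\pm p^{e}$ then every $d_{j}$ with $j\geq 1$ is a positive power of $p$, hence divisible by $p$. Any sum $\sum \lambda_{j}d_{j}$ is then divisible by $p$, while $n-1=\pm p^{e}-1$ is not. This immediately rules out an $\mathcal{S}$-presentation for any $\mathcal{S}\subset\mathbb{Z}$. (Note this also handles $e=1$, where the sum on the left is empty and $n-1\neq 0$.)

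For the existence part, assume $n$ has two distinct prime divisors $p$ and $q$. Then both $p$ and $q$ lie in the list $d_{1},\ldots,d_{k}$, so $\gcd(d_{1},\ldots,d_{k})$ divides $\gcd(p,q)=1$. By Bézout's lemma there exist integers $\lambda_{1},\ldots,\lambda_{k}$ with $\sum_{j=1}^{k}\lambda_{j}d_{j}=n-1$. Taking $\mathcal{S}=\{\lambda_{1},\ldots,\lambda_{k}\}$ (as a set, removing repetitions) gives $\#\mathcal{S}\leq k=\tau(n)-2$, and $n$ admits an $\mathcal{S}$-presentation by construction.

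No step of this argument is genuinely difficult; the only mild point to watch is the sign of $n$ (the argument goes through verbatim for negative $n$ since divisors are still taken positive and Bézout produces any integer target), and the trivial observation that when $n$ has two distinct prime factors one has $\tau(n)\geq 4$, so $k\geq 2$ and the bound $\#\mathcal{S}\leq\tau(n)-2$ is meaningful.
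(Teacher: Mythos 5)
Your argument is correct and is essentially the paper's own proof: both reduce the first claim to the observation that two distinct prime divisors among $d_1,\ldots,d_k$ force $\gcd(d_1,\ldots,d_k)=1$, so the linear Diophantine equation $\sum_j \lambda_j d_j = n-1$ is solvable with at most $k=\tau(n)-2$ distinct coefficients. Your treatment of the prime-power case (every $d_j$ with $j\geq 1$ is divisible by $p$ while $n-1$ is not) is the standard argument the paper dismisses as obvious, so nothing further is needed.
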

\begin{proof}
If $n$ has at least two prime factors, with positive divisors 
\begin{equation} \nonumber
1=d_0 < d_1 < \cdots < d_k < d_{k+1} = |n|,    
\end{equation}
then $\text{gcd}(d_1, \ldots, d_k)=1$. It follows that the linear diophantine equation
\begin{equation} \nonumber
    \sum_{j=1}^{k}d_{j}x_{j}=n-1
\end{equation}
has solutions. The second claim is obvious.
\end{proof}

For $\mathcal{S} \subset \mathbb{Z}$, we denote the set of $\mathcal{S}$-perfect numbers by $\mathcal{P}(\mathcal{S})$, omitting curly brackets when $\mathcal{S}$ is given by enumeration of its elements. We have the following easy inclusions.

\begin{proposition}
If $(\mathcal{S}_\alpha)_{\alpha \in \mathcal{A}}$ is any family of subsets $\mathcal{S}_{\alpha} \subset \mathbb{Z}$, then
\nonumber \begin{align}
    &\bigcup_{\alpha \in \mathcal{A}}\mathcal{P}(\mathcal{S}_\alpha) \subset \mathcal{P}(\bigcup_{\alpha \in \mathcal{A}}\mathcal{S}_\alpha), \\
    &\mathcal{P}(\bigcap_{\alpha \in \mathcal{A}}\mathcal{S}_\alpha) \subset \bigcap_{\alpha \in \mathcal{A}}\mathcal{P}(\mathcal{S}_\alpha) .
\end{align}
\end{proposition}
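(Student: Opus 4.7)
The plan is to chase both inclusions directly through the definition of $\mathcal{S}$-perfection, exploiting the monotonicity of the defining condition with respect to $\mathcal{S}$. The key observation is that an $\mathcal{S}$-presentation of $n$ is witnessed by a tuple of weights $(\lambda_1, \ldots, \lambda_k)$, and such a tuple remains a valid $\mathcal{S}'$-presentation for any $\mathcal{S}' \supset \mathcal{S}$, since the divisors $d_1, \ldots, d_k$ of $n$ and the identity $1 + \sum_j \lambda_j d_j = n$ depend only on $n$, not on $\mathcal{S}$.

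For the first inclusion, I would pick $n \in \bigcup_{\alpha} \mathcal{P}(\mathcal{S}_\alpha)$, fix an index $\beta$ with $n \in \mathcal{P}(\mathcal{S}_\beta)$, and select weights $\lambda_1, \ldots, \lambda_k \in \mathcal{S}_\beta$ realizing a presentation of $n$. Since $\mathcal{S}_\beta \subset \bigcup_\alpha \mathcal{S}_\alpha$, each $\lambda_j$ also lies in the union, so the same identity exhibits $n$ as a member of $\mathcal{P}(\bigcup_\alpha \mathcal{S}_\alpha)$.

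For the second inclusion, I would begin with $n \in \mathcal{P}(\bigcap_\alpha \mathcal{S}_\alpha)$ and take weights $\lambda_1, \ldots, \lambda_k \in \bigcap_\alpha \mathcal{S}_\alpha$ realizing its presentation. For every index $\alpha$, each $\lambda_j$ lies in $\mathcal{S}_\alpha$, so the presentation witnesses $n \in \mathcal{P}(\mathcal{S}_\alpha)$; this holds for all $\alpha$, hence $n \in \bigcap_\alpha \mathcal{P}(\mathcal{S}_\alpha)$.

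I do not anticipate any genuine obstacle: the proposition is purely formal, amounting to the observation that the existential quantifier in the definition of $\mathcal{P}(\mathcal{S})$ is monotone in $\mathcal{S}$. The only point worth flagging is that both inclusions can in general be strict, since enlarging $\mathcal{S}$ may admit genuinely new presentations (e.g.\ combining weights from distinct $\mathcal{S}_\alpha$), but this is not needed for the statement at hand.
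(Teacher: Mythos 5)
Your argument is correct and is exactly the definition-chasing the paper intends: the paper's own proof is simply ``Follows immediately from the definitions,'' and your write-up supplies the monotonicity argument that justifies that claim. Nothing differs in substance; you have merely made the implicit reasoning explicit.
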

\begin{proof}
Follows immediately from the definitions.
\end{proof}

\section{Some special cases} \label{sec2}

In this section we investigate the $\{0,m\}$-perfect numbers and $\{-1,m\}$-perfect numbers for arbitrary $m \geq 1$. The former are dispatched quite easily via the following lemma.
\begin{lemma}
If $n \in \mathcal{P}(0, m)$ for some $m \geq 1$, then also $(m+1)n \in \mathcal{P}(0, m)$.
\end{lemma}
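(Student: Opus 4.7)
The plan is to take an $\{0,m\}$-presentation of $n$ and manipulate it algebraically into an $\{0,m\}$-presentation of $(m+1)n$, using the observation that multiplying by $m+1$ introduces exactly one extra copy of $n$ on the right-hand side.

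First, I would unwind the definition. Since $n \in \mathcal{P}(0,m)$, there is a subset $D$ of the proper divisors of $n$ greater than $1$ such that
\begin{equation}\nonumber
n = 1 + m\sum_{d \in D} d.
\end{equation}
The key computation is then
\begin{equation}\nonumber
(m+1)n = n + mn = 1 + m\sum_{d \in D} d + mn = 1 + m\Bigl(n + \sum_{d \in D} d\Bigr).
\end{equation}

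Next I would verify that the set $E := \{n\} \cup D$ consists of distinct proper divisors of $(m+1)n$, each strictly greater than $1$. Membership is immediate: $n$ divides $(m+1)n$ and is less than $(m+1)n$ since $m \geq 1$; and every $d \in D$ divides $n$, hence divides $(m+1)n$, and satisfies $1 < d < n < (m+1)n$. Distinctness follows from the fact that every element of $D$ is a proper divisor of $n$, so $n \notin D$.

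Finally, to exhibit the $\{0,m\}$-presentation of $(m+1)n$ required by the definition, I would assign coefficient $m$ to each divisor in $E$ and coefficient $0$ to every other proper divisor of $(m+1)n$ greater than $1$. The displayed identity above is then exactly such a presentation, so $(m+1)n \in \mathcal{P}(0,m)$. I do not anticipate any real obstacle here; the lemma amounts to the single algebraic identity $(m+1)n = 1 + m(n + \sum_{d \in D} d)$, and the only content is checking that the divisor set $E$ is admissible for the definition applied to $(m+1)n$.
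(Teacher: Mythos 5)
Your proof is correct and is essentially the paper's argument: the paper likewise adds $mn$ to a $\{0,m\}$-presentation of $n$ to obtain $(m+1)n = \sum + mn$, relying on the coefficient $0$ to absorb the remaining proper divisors of $(m+1)n$. Your explicit verification that $E = \{n\}\cup D$ consists of distinct proper divisors of $(m+1)n$ is a detail the paper leaves implicit, but there is no difference in substance.
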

\begin{proof}
If $n = \sum$ is a $\{0, m\}$-presentation of $n$, then $(m+1)n=\sum + mn$ is a $\{0,m\}$-presentation of $(m+1)n$.
\end{proof}
Therefore it suffices to exhibit a single $n \in \mathcal{P}(0,m)$ to generate infinitely many $\{0,m\}$-perfect numbers, which gives the following theorem.
\begin{theorem}
There exist infinitely many $\{0,m\}$-perfect numbers for all $m \geq 1$.
\end{theorem}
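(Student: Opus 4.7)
The strategy is to combine the preceding lemma with a single explicit construction: if one produces a single $n_{m} \in \mathcal{P}(0,m)$ for every $m \geq 1$, then the lemma yields the strictly increasing sequence $n_{m}, (m+1)n_{m}, (m+1)^{2}n_{m}, \ldots \subset \mathcal{P}(0,m)$, and the infinitude follows.

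To locate such an $n_{m}$, I would try the simplest possible shape, $n = ab$ with two distinguished divisors $a, b \in (1, n)$, both weighted by $m$ and every other proper divisor weighted by $0$. The presentation equation $n - 1 = m(a+b)$ then becomes $(a - m)(b - m) = m^{2} + 1$, and the trivial factorization $m^{2}+1 = 1 \cdot (m^{2}+1)$ suggests the candidate $a = m+1$, $b = m^{2}+m+1$. I propose $n_{m} = (m+1)(m^{2}+m+1)$ as the base case for every $m \geq 1$. Note that, unlike in the original heuristic of seeking $n = pq$ with $p,q$ prime, I do not need $a$ or $b$ to be prime; all the presentation requires is $a \mid n$ and $b \mid n$, which is automatic here.

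The verification is then short. (i) Check that $a = m+1$ and $b = m^{2}+m+1$ are distinct integers in $(1, n)$, which follows from $m \geq 1$, so they really are two proper divisors of $n_{m}$. (ii) Expand and confirm
\[
m(a+b) = m(m^{2} + 2m + 2) = m^{3} + 2m^{2} + 2m = (m+1)(m^{2}+m+1) - 1 = n_{m} - 1.
\]
(iii) Conclude that assigning weight $m$ to $a$ and $b$ and weight $0$ to the remaining proper divisors of $n_{m}$ yields a valid $\{0,m\}$-presentation, so $n_{m} \in \mathcal{P}(0,m)$; then invoke the lemma. As a sanity check, $m = 1$ returns $n_{1} = 6$ and $m = 2$ returns $n_{2} = 21$, matching the lists in the earlier example. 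I do not anticipate any serious obstacle: the trivial factorization of $m^{2}+1$ is always available, and the essential observation is just that an $\mathcal{S}$-presentation only cares which divisors appear with nonzero weight, not whether they happen to be prime.
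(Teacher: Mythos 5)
Your proposal is correct and is essentially the paper's own proof: the paper likewise combines the lemma ($n \in \mathcal{P}(0,m) \Rightarrow (m+1)n \in \mathcal{P}(0,m)$) with the single witness $(m+1)(m^2+m+1) = 1 + m(m+1) + m(m^2+m+1)$. Your derivation via $(a-m)(b-m) = m^2+1$ is just added motivation for the same identity, and your verification is sound.
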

\begin{proof}
Note that
\begin{equation} \nonumber
    (m+1)(m^2+m+1)=1+m(m+1)+m(m^2+m+1)
\end{equation}
is $\{0, m\}$-perfect for any $m \geq 1$.
\end{proof}

The $\{-1,m\}$-perfect numbers are more interesting. We focus on the $\{-1,m\}$-perfect numbers of the form $n=2^kp$ for some odd prime $p$. The following lemma and corollary will be useful for proving that there are infinitely many such numbers, and characterizing their occurrences among numbers of the same form. We make frequent use of the $2$-adic valuation $\nu_{2}(n) = \text{max}(k \geq 0: 2^{k} \text{ divides } n)$.
\begin{lemma} \label{lem2}
Let $0 \leq s \leq t$, $m \geq 1$. Then the numbers of the form $n=\sum\limits_{j=s}^t\lambda_j\cdot2^j$ with $\lambda_s, \ldots, \lambda_t \in \{-1,m\}$ are precisely the numbers $n \equiv \modd{-2^s(2^{t-s+1}-1)} {2^s(m+1)}$ in the interval
\begin{equation} \nonumber
    -2^s(2^{t-s+1}-1) \leq n \leq 2^sm(2^{t-s+1}-1).
\end{equation}
\end{lemma}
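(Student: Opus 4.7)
The plan is to reparametrize the coefficients so that the problem reduces to binary expansion. Every element of $\{-1, m\}$ can be written uniquely as $-1 + (m+1)\epsilon$ with $\epsilon \in \{0, 1\}$, so setting $\lambda_j = -1 + (m+1)\epsilon_j$ with $\epsilon_j \in \{0,1\}$ gives
\[
n \;=\; \sum_{j=s}^{t}\lambda_j 2^j \;=\; -\sum_{j=s}^{t} 2^j \;+\; (m+1)\sum_{j=s}^{t}\epsilon_j 2^j \;=\; -2^s(2^{t-s+1}-1) \;+\; 2^s(m+1)\,N,
\]
where I write $N := \sum_{j=s}^{t}\epsilon_j 2^{j-s}$.

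Next I would invoke the uniqueness of binary representation: as the tuple $(\epsilon_s,\ldots,\epsilon_t)$ ranges over $\{0,1\}^{t-s+1}$, the integer $N$ takes each value in $\{0, 1, \ldots, 2^{t-s+1}-1\}$ exactly once. Consequently the set of representable $n$ is precisely
\[
\bigl\{\,-2^s(2^{t-s+1}-1) + 2^s(m+1)\,N \;:\; N = 0, 1, \ldots, 2^{t-s+1}-1\,\bigr\}.
\]

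From this explicit description both conclusions are immediate: every representable $n$ satisfies $n \equiv -2^s(2^{t-s+1}-1) \pmod{2^s(m+1)}$; the extremes $N=0$ and $N=2^{t-s+1}-1$ yield the endpoints $-2^s(2^{t-s+1}-1)$ and $-2^s(2^{t-s+1}-1) + 2^s(m+1)(2^{t-s+1}-1) = 2^s m(2^{t-s+1}-1)$ of the claimed interval; and conversely, any integer in that arithmetic progression and interval corresponds to a unique $N$, hence to a valid choice of $(\lambda_s,\ldots,\lambda_t)$.

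There is no real obstacle here; the whole proof is driven by the single observation that translating the alphabet $\{-1,m\}$ by $1$ and scaling by $(m+1)^{-1}$ converts it to $\{0,1\}$, after which binary expansion does all the work. The only thing to be careful about is bookkeeping of the endpoints and verifying that the correspondence between tuples $(\epsilon_j)$ and values of $N$ is bijective, which is the standard fact about base-$2$ representations of integers in $[0, 2^{t-s+1}-1]$.
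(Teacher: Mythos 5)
Your proof is correct and is essentially the paper's argument in a more explicit form: the paper shows all such sums lie in the stated residue class and interval and then counts that the $2^{t-s+1}$ choices of $(\lambda_s,\ldots,\lambda_t)$ yield $2^{t-s+1}$ distinct values, which is exactly the number of candidates. Your substitution $\lambda_j = -1 + (m+1)\epsilon_j$ reducing everything to uniqueness of binary expansion is a clean way of justifying the distinctness claim that the paper asserts without elaboration, but it is the same underlying mechanism.
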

\begin{proof}
It is easy to see that
\begin{equation} \nonumber
    \sum_{j=s}^t \lambda_j \cdot 2^j \equiv \modd{-\sum_{j=s}^t 2^j = -2^s(2^{t-s+1}-1)} {2^s(m+1)}
\end{equation}
for any $\lambda_s, \ldots, \lambda_t \in \{-1,m\}$. Therefore the different choices of $\lambda_s, \ldots, \lambda_t \in \{-1,m\}$ give $2^{t-s+1}$ different numbers $n \equiv \modd{-2^s(2^{t-s+1}-1)} {2^s(m+1)}$ in the interval
\begin{equation} \nonumber
    -2^s(2^{t-s+1}-1) \leq n \leq \sum_{j=s}^t 2^jm = 2^sm(2^{t-s+1}-1).
\end{equation}
Since there are exactly $2^{t-s+1}$ such numbers, we are done.
\end{proof}
\begin{corollary} \label{cor1}
Fix $m \geq 1$ and set $\beta = \nu_{2}(m+1)$. If $n=\sum\limits_{j=s}^t \lambda_j \cdot 2^j$ for some $0 \leq s \leq t$, $\lambda_s, \ldots, \lambda_t \in \{-1,m\}$ with $t \geq s+\beta-1$, then also $n=\sum\limits_{j=s}^{t+\alpha} \Lambda_j \cdot 2^j$ for some $\Lambda_s, \ldots, \Lambda_{t+\alpha} \in \{-1,m\}$ whenever $2^\alpha \equiv \modd{1} {(m+1)/2^\beta}$.
\end{corollary}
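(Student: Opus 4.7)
The plan is to apply Lemma \ref{lem2} directly, but with the index range extended from $[s,t]$ to $[s,t+\alpha]$. By that lemma, $n$ admits a representation $\sum_{j=s}^{t+\alpha}\Lambda_j\cdot 2^j$ with each $\Lambda_j\in\{-1,m\}$ if and only if
\[
n \equiv -2^s(2^{t+\alpha-s+1}-1) \pmod{2^s(m+1)}
\]
and $n$ lies in the interval $[-2^s(2^{t+\alpha-s+1}-1),\,2^sm(2^{t+\alpha-s+1}-1)]$. So I would simply verify these two conditions using the hypotheses.

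The interval condition is immediate: Lemma \ref{lem2} applied to the given representation already places $n$ in the old interval $[-2^s(2^{t-s+1}-1),\,2^sm(2^{t-s+1}-1)]$, and enlarging $t$ to $t+\alpha$ (with $\alpha\geq 0$) only widens this interval, so containment persists.

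The congruence condition is the real content. From Lemma \ref{lem2} applied to the original representation we already have $n\equiv -2^s(2^{t-s+1}-1)\pmod{2^s(m+1)}$, so it suffices to check that
\[
-2^s(2^{t+\alpha-s+1}-1)\equiv -2^s(2^{t-s+1}-1)\pmod{2^s(m+1)},
\]
equivalently $(m+1)\mid 2^{t-s+1}(2^\alpha-1)$. Factoring $m+1=2^\beta q$ with $q$ odd separates this into a $2$-part and an odd part: the hypothesis $t\geq s+\beta-1$ yields $2^\beta\mid 2^{t-s+1}$, while the hypothesis $2^\alpha\equiv 1\pmod{q}$ yields $q\mid 2^\alpha-1$. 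Together these give the required divisibility.

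There is no serious obstacle here; the main conceptual point, and the reason the hypotheses of the corollary take the shape they do, is simply that $\beta=\nu_2(m+1)$ measures exactly the slack needed between $s$ and $t$ to absorb the $2$-adic part of $m+1$, thereby isolating a condition on $\alpha$ that involves only the odd part $(m+1)/2^\beta$.
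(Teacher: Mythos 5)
Your proposal is correct and follows essentially the same route as the paper: both reduce the claim to showing $-2^s(2^{t+\alpha-s+1}-1)\equiv -2^s(2^{t-s+1}-1) \pmod{2^s(m+1)}$ and then use $t\geq s+\beta-1$ to absorb the $2$-adic part of $m+1$ and the hypothesis on $\alpha$ to handle the odd part. Your version is marginally more explicit in phrasing this as the divisibility $(m+1)\mid 2^{t-s+1}(2^\alpha-1)$ and in checking the interval condition of Lemma \ref{lem2}, which the paper leaves implicit.
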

\begin{proof}
If $2^\alpha \equiv \modd{1} {(m+1)/2^\beta}$, then $2^{t+\alpha+1} \equiv \modd{2^{t+1}} {2^{t+1-\beta}(m+1)}$. If $t+1-\beta \geq s$, then also $2^{t+\alpha+1} \equiv \modd{2^{t+1}} {2^s(m+1)}$, so $-2^s(2^{t-s+1}-1) \equiv \modd{-2^s(2^{t+\alpha-s+1}-1)} {2^s(m+1)}$, as required by the conditions in Lemma \ref{lem2}.
\end{proof}
\begin{theorem} \label{thrm1}
Fix $m \geq 1$ and set $\beta = \nu_{2}(m+1)$. If both $2^k p$, $2^{k+\alpha}p \in \mathcal{P}(-1,m)$ for some odd prime $p$ and $k, \alpha \geq 1$, then $2^\alpha \equiv \modd{1} {(m+1)/2^\beta}$. Conversely, if $2^k p \in \mathcal{P}(-1,m)$ for some odd prime $p$ and $k \geq \beta$, then also $2^{k+\alpha} p \in \mathcal{P}(-1,m)$ whenever $2^\alpha \equiv \modd{1} {(m+1)/2^\beta}$.
\end{theorem}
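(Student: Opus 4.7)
The approach combines the congruence/range description of $\{-1,m\}$-sums from Lemma 2 with the length-extension property of Corollary 1. If $2^Kp = 1+A+Bp$ is any $\{-1,m\}$-presentation with $A = \sum_{j=1}^K\lambda_j 2^j$ and $B = \sum_{j=0}^{K-1}\mu_j 2^j$, then Lemma 2 yields $A \equiv -2(2^K-1) \pmod{2(m+1)}$ and $B \equiv -(2^K-1) \pmod{m+1}$, and combining these reduces the equation $2^Kp = 1+A+Bp$ modulo $m+1$ to the single congruence $(p+1)(2^{K+1} - 1) \equiv 2 \pmod{m+1}$.

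For the forward direction I would apply this to both $K=k$ and $K=k+\alpha$ and subtract, obtaining $(p+1) \cdot 2^{k+1}(2^\alpha - 1) \equiv 0 \pmod{m+1}$. Setting $M := (m+1)/2^\beta$ and reducing modulo the odd factor $M$, the original congruence forces $\gcd(p+1, M) \mid 2$, which gives $\gcd(p+1, M) = 1$ since $M$ is odd. Since also $\gcd(2^{k+1}, M) = 1$, one may cancel these factors to conclude $2^\alpha \equiv 1 \pmod M$.

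For the converse I would construct a $\{-1,m\}$-presentation of $2^{k+\alpha}p$ from the given one of $2^kp$ by shifting only the $B$-part: set $A' = A$ and $B' = B + 2^k(2^\alpha - 1)$. A direct computation gives $1 + A' + B'p = (1+A+Bp) + 2^k(2^\alpha - 1)p = 2^{k+\alpha}p$. To see that this is a legitimate presentation, I would use Corollary 1 (applicable because $k \geq \beta$) to re-express $A$ as $\sum_{j=1}^{k+\alpha}\Lambda_j 2^j$ with $\Lambda_j \in \{-1,m\}$, handling $A'$; and then Lemma 2 to verify that $B'$ lies in the required residue class $B' \equiv -(2^{k+\alpha}-1) \pmod{m+1}$ and interval $B' \in [-(2^{k+\alpha}-1), m(2^{k+\alpha}-1)]$. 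The main point, where both hypotheses combine, is the congruence check, which amounts to $2^{k+1}(2^\alpha - 1) \equiv 0 \pmod{m+1}$ and follows from $M \mid 2^\alpha - 1$ together with $2^\beta \mid 2^{k+1}$ (using $k \geq \beta$); the range check is an elementary consequence of $2^\alpha \geq 1$.
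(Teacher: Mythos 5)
Your proposal is correct and follows essentially the same route as the paper: the forward direction reduces both presentations modulo $m+1$ to get $(2^{K+1}-1)(p+1)\equiv \modd{2} {m+1}$ and subtracts, and the converse re-expresses the power-of-two part via Corollary \ref{cor1} and absorbs the added term $2^k(2^\alpha-1)p$ into the $p$-part via the congruence-and-interval criterion of Lemma \ref{lem2}. Your $B'$ is exactly the paper's quantity $A=\sum_{j=0}^{k-1}\lambda^{(2)}_j\cdot 2^j+\sum_{j=k}^{k+\alpha-1}2^j$, and your explicit range check is a minor detail the paper leaves implicit.
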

\begin{proof}
Suppose first that both $2^kp,$ and $2^{k+\alpha}p \in \mathcal{P}(-1,m)$, with presentations
\begin{align}
    \label{eqn1} \tag{1} 2^kp  &=  1+\sum_{j=1}^{k}\lambda^{(1)}_{j} \cdot 2^j + \sum_{j=0}^{k-1}\lambda^{(2)}_{j} \cdot 2^j p ,  \\
    \label{eqn2} \tag{2} 2^{k+\alpha}p  &= 1+\sum_{j=1}^{k+\alpha}\Lambda^{(1)}_{j} \cdot 2^j + \sum_{j=0}^{k+\alpha-1}\Lambda^{(2)}_{j} \cdot 2^j p
\end{align}
respectively. Note that every $\lambda^{(i)}_j, \Lambda^{(i)}_j \equiv \modd{-1} {m+1}$; we reduce the first equation to find \begin{equation} \nonumber
    2^kp  \equiv \modd{1-\sum_{j=1}^{k}2^j - \sum_{j=0}^{k-1} 2^j p} {m+1},
\end{equation}
or $(2^{k+1}-1)(p+1) \equiv \modd{2} {m+1}$, from which it follows easily that $p+1$ must be a unit modulo $(m+1)/2^\beta$. 

Subtracting \eqref{eqn1} from \eqref{eqn2} and reducing again modulo $m+1$ gives
\begin{equation} \nonumber
    2^kp(2^\alpha - 1) \equiv \modd{-\sum_{j=k+1}^{k+\alpha}2^j - \sum_{j=k}^{k+\alpha-1} 2^j p} {m+1},
\end{equation}
or $2^{k+1}(p+1)(2^{\alpha}-1) \equiv \modd{0} {m+1}$; so also $2^{k+1}(p+1)(2^{\alpha}-1) \equiv \modd{0} {(m+1)/2^\beta}$. Since both $2^{k+1}$ and $p+1$ are units modulo $(m+1)/2^\beta$, we conclude that $2^{\alpha}-1 \equiv \modd{0} {(m+1)/2^\beta}$.

Conversely, suppose $k \geq \beta$ and $2^k p \in \mathcal{P}(-1,m)$ for some odd prime $p$, with presentation
given by \eqref{eqn1}.
Let $2^\alpha \equiv \modd{1} {(m+1)/2^\beta}$. We have 
\begin{equation} \tag{3} \label{eqn3}
    2^{k+\alpha}p  =  1+\sum_{j=1}^{k}\lambda^{(1)}_{j} \cdot 2^j + \sum_{j=0}^{k-1}\lambda^{(2)}_{j} \cdot 2^j p + \sum_{j=k}^{k+\alpha-1}2^jp.
\end{equation}
Since $k \geq \beta$, we can use Corollary \ref{cor1} to find $\Lambda^{(1)}_{1}, \ldots, \Lambda^{(1)}_{k+\alpha}$ such that 
\begin{equation} \nonumber
    \sum_{j=1}^{k+\alpha}\Lambda^{(1)}_j \cdot 2^j =\sum_{j=1}^{k}\lambda^{(1)}_j \cdot 2^j.
\end{equation}
As for the remaining sum in \eqref{eqn3}, set $A=\sum_{j=0}^{k-1}\lambda^{(2)}_{j} \cdot 2^j  + \sum_{j=k}^{k+\alpha-1}2^j$. Reducing modulo $m+1$,
\begin{equation} \nonumber
    A \equiv \modd{2^{k+\alpha}-2^{k+1}+1 \equiv -(2^{k+\alpha}-1)}  {m+1},
\end{equation}
where we have made use of the hypotheses $2^\alpha \equiv \modd{1} {(m+1)/2^\beta}$ and $k \geq \beta$ to substitute $2^{k+\alpha} \equiv \modd{2^k} {m+1}$. Therefore $A$ satisfies the conditions of Lemma \ref{lem2} (with $s=0$, $t=k+\alpha-1$), so we can find $\Lambda^{(2)}_{0}, \ldots, \Lambda^{(2)}_{k+\alpha-1}$ such that $A = \sum_{j=0}^{k+\alpha-1}\Lambda^{(2)}_j$. 

Thus we obtain a presentation
\begin{equation} \nonumber
    2^{k+\alpha}p = 1 + \sum_{j=1}^{k+\alpha}\Lambda^{(1)}_j \cdot 2^j + \sum_{j=0}^{k+\alpha-1}\Lambda^{(2)}_j \cdot 2^j p.
\end{equation}
\end{proof}
It follows that a single $\{-1,m\}$-perfect number of the form $2^kp$ with $k \geq \beta$ and $p$ an odd prime is sufficient to generate infinitely many. The following theorem provides a construction.
\begin{theorem} \label{thrm2}
 Fix $m \geq 1$ and set $\beta = \nu_{2}(m+1)$. Choose $\alpha > \beta$ such that $2^\alpha \equiv \modd{1} {(m+1)/2^\beta}$. If $p \equiv \modd{2(2^{\alpha+1}-1)-1} {2(m+1)}$ is prime, then $2^kp \in \mathcal{P}(-1,m)$ for some $k \geq \alpha$.
\end{theorem}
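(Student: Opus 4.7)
The plan is to reduce---via Lemma \ref{lem2}---the existence of a $\{-1,m\}$-presentation of $2^k p$ to a bivariate linear Diophantine equation over a box, establish the requisite divisibility, and then solve the equation for a suitable $k$ in an arithmetic progression $\geq \alpha$.

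Any $\{-1,m\}$-presentation of $2^k p$ has the form $2^k p = 1 + B + pC$, with $B = \sum_{j=1}^k \lambda^{(1)}_j \cdot 2^j$ and $C = \sum_{j=0}^{k-1} \lambda^{(2)}_j \cdot 2^j$, where $\lambda^{(i)}_j \in \{-1, m\}$. By Lemma \ref{lem2}, as the $\lambda^{(i)}_j$ vary independently, $B$ takes exactly the values $-(2^{k+1}-2) + 2(m+1)b$ for $b \in \{0, \ldots, 2^k - 1\}$, and $C$ takes exactly the values $-(2^k - 1) + (m+1)c$ for $c \in \{0, \ldots, 2^k - 1\}$. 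Substituting into $B + pC = 2^k p - 1$ and simplifying reduces the problem to finding $b, c \in \{0, \ldots, 2^k - 1\}$ satisfying
\[
(m+1)(2b + pc) = 2^{k+1}(p+1) - (p+3).
\]

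For the divisibility, set $M = (m+1)/2^\beta$, so $M$ is odd and $2^\alpha \equiv 1 \pmod M$. The hypothesis $p \equiv 2^{\alpha+2} - 3 \pmod{2(m+1)}$ gives $p+1 \equiv 2 \pmod M$ and $p+3 \equiv 4 \pmod M$, so $2^{k+1}(p+1) - (p+3) \equiv 2^{k+2} - 4 \pmod M$; this vanishes precisely when $\operatorname{ord}_M(2) \mid k$, which holds at $k = \alpha$ and every larger multiple of $\operatorname{ord}_M(2)$ (noting $\operatorname{ord}_M(2) \mid \alpha$). Modulo $2^\beta$, the assumptions $\alpha > \beta$ and $k \geq \alpha$ force every relevant power of $2$ to be divisible by $2^\beta$, so divisibility by $2^\beta$ is automatic. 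Hence $N_k := [2^{k+1}(p+1)-(p+3)]/(m+1)$ is a positive integer for every admissible $k$.

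For the range, since $p$ is odd the equation $2b + pc = N_k$ forces $c \equiv N_k \pmod 2$ and then $b = (N_k - pc)/2$. The constraint $0 \leq b \leq 2^k - 1$ confines $c$ to a real interval of length $2(2^k - 1)/p$; once $2^k \geq p + 1$ this has length $\geq 2$ and therefore contains integers of both parities, and elementary estimates show its intersection with $[0, 2^k - 1]$ still accommodates a $c$ of the correct parity. Choosing any $k \geq \alpha$ with $\operatorname{ord}_M(2) \mid k$ and $2^k \geq p + 1$ (such $k$ exist, since $\operatorname{ord}_M(2) \mid \alpha$) thus produces the required $(b, c)$, and hence the presentation.

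The main obstacle is coordinating the box constraint $b, c \in [0, 2^k - 1]$ with the forced parity of $c$: for small $k$ valid solutions may fail to exist. For instance, with $m = 1$, $\alpha = 2$, $p = 5$, one computes $N_2 = 20$, yet $2b + 5c = 20$ has no solution in $\{0, \ldots, 3\}^2$; the presentation first appears at $k = 3$ with $(b, c) = (7, 6)$. The freedom to take $k$ arbitrarily large along the progression $\alpha + \operatorname{ord}_M(2) \cdot \mathbb{Z}_{\geq 0}$ is what makes the argument work in general.
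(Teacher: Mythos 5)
Your route is genuinely different from the paper's. You use Lemma \ref{lem2} to parametrize \emph{all} achievable values of the two sums $B=\sum_{j=1}^k\lambda^{(1)}_j2^j$ and $C=\sum_{j=0}^{k-1}\lambda^{(2)}_j2^j$ and then solve the resulting Diophantine equation $(m+1)(2b+pc)=2^{k+1}(p+1)-(p+3)$ inside the box $\{0,\dots,2^k-1\}^2$; the paper instead guesses an explicit split $2^kp=Np+(2^k-N)p$ with $N=2^{\alpha+1}-1$, checks that $Np-1$ and $2^k-N$ each lie in the correct residue class modulo $2(m+1)$, and applies Lemma \ref{lem2} once to each piece. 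The identity $N^2\equiv 1\pmod{2(m+1)}$ is exactly what makes that guess land in the right congruence class, and the range conditions for the two pieces are then immediate for large $k$. Your reduction to the equation, and your divisibility analysis of $2^{k+1}(p+1)-(p+3)$ modulo $M=(m+1)/2^\beta$ and modulo $2^\beta$, are correct (the latter because $p+3\equiv 2^{\alpha+2}\equiv 0\pmod{2^{\beta+1}}$ and $k+1>\beta$), and your systematic approach has the merit of showing exactly which $k$ admit a presentation, not just that one exists.

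The gap is the step you label ``elementary estimates show its intersection with $[0,2^k-1]$ still accommodates a $c$ of the correct parity.'' This is precisely where the work lies, and the threshold $2^k\geq p+1$ you propose does not deliver it. Your length-$\geq 2$ argument applies to the interval $I_1$ of $c$-values permitted by $0\leq b\leq 2^k-1$, but the relevant interval is $I_1\cap[0,2^k-1]$, which can be much shorter: for $m=1$ one computes $N_k=2^k(p+1)-(p+3)/2$, the upper constraint $N_k/p$ always exceeds $2^k-1$, and the intersection is $[(N_k-2(2^k-1))/p,\,2^k-1]$ of length $(2^k-(p+1)/2)/p$, which is roughly $1/2$ when $2^k\approx p+1$ --- too short to guarantee an integer of a prescribed parity. (Your own example $m=1$, $p=5$, $k=3$ succeeds with an intersection of length $1$ only by luck of where the endpoints fall.) The fix is available within your own framework: the admissible $k$ form an infinite arithmetic progression, and the length of $I_1\cap[0,2^k-1]$ grows linearly in $2^k$ (with positive leading coefficient for every $m\geq 1$ and odd prime $p$, which requires a short case check depending on the sign of $p-m$), so taking $k$ large enough --- e.g., $2^k\geq 3p$ already suffices when $m=1$ --- closes the argument. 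As written, though, the proof asserts rather than proves the one estimate that the whole construction hinges on, and the explicit sufficient condition it offers is wrong.
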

\begin{proof} Set $N  = 2^{\alpha+1}-1$, and note that $\alpha > \beta$ implies that $N^2 \equiv \modd{1} {2(m+1)}$. If $p \equiv \modd{2(2^{\alpha+1}-1)-1} {2(m+1)}$, then 
\begin{equation} \nonumber
    Np \equiv \modd{2N^2-N=2-N = 3-2^{\alpha+1}} {2(m+1)}.
\end{equation}
That is, $Np-1 \equiv \modd{-2(2^{\alpha}-1)} {2(m+1)}$. Choose $k\geq \alpha$ with $2^k \equiv \modd{2^\alpha} {2(m+1)}$ such that $Np-1 \leq 2m(2^k-1)$. Then by Lemma \ref{lem2}, there are some $\lambda^{(1)}_{1}, \ldots, \lambda^{(1)}_{k} \in \{-1, m\}$ such that 
\begin{equation} \nonumber
    Np = 1 + \sum_{j=1}^{k}\lambda^{(1)}_{j} \cdot 2^j.
\end{equation}
We also have $2^k-N \equiv \modd{-(2^k-1)} {2(m+1)}$, so \begin{equation} \nonumber
    2^k-N = \sum_{j=0}^{k-1}\lambda^{(2)}_{j} \cdot 2^j.
\end{equation}
for some $\lambda^{(2)}_{1}, \ldots, \lambda^{(2)}_{k-1} \in \{-1, m\}$. Therefore
\begin{equation} \nonumber
    1 + \sum_{j=1}^{k}\lambda^{(1)}_{j} \cdot 2^j + \sum_{j=0}^{k-1}\lambda^{(2)}_{j} \cdot 2^jp = Np + (2^k-N)p = 2^kp
\end{equation}
is a presentation.
\end{proof}
\begin{corollary}
There exist infinitely many $\{-1,m\}$-perfect numbers for every $m \geq 1$.
\end{corollary}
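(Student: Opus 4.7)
The plan is to chain Theorem \ref{thrm2} and Theorem \ref{thrm1}: Theorem \ref{thrm2} will furnish a single $\{-1,m\}$-perfect number of the form $2^kp$ with $k \geq \beta$, and Theorem \ref{thrm1} will then bootstrap this into an infinite family $\{2^{k+\alpha'}p\}$ as $\alpha'$ ranges over the positive integers satisfying $2^{\alpha'} \equiv 1 \pmod{(m+1)/2^\beta}$. The only work not already absorbed into those theorems is verifying that the residue class appearing in Theorem \ref{thrm2} actually contains a prime.

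I would fix $m \geq 1$, set $\beta = \nu_2(m+1)$, and use that $(m+1)/2^\beta$ is odd to obtain a well-defined multiplicative order $d$ of $2$ modulo $(m+1)/2^\beta$. Any multiple of $d$ exceeding $\beta$ gives a valid $\alpha$ for Theorem \ref{thrm2}, while every positive multiple of $d$ is a valid shift $\alpha'$ for Theorem \ref{thrm1}; in particular there are already infinitely many such $\alpha'$ available once a single base example is in hand.

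Next I would apply Dirichlet's theorem on primes in arithmetic progressions to the class $p \equiv 2(2^{\alpha+1}-1)-1 = 2^{\alpha+2}-3 \pmod{2(m+1)}$. Coprimality with $2(m+1)$ is where the choice of $\alpha$ pays off: the residue $2^{\alpha+2}-3$ is odd (hence coprime to $2^{\beta+1}$), and modulo $(m+1)/2^\beta$ it reduces to $4-3=1$ by the defining property of $\alpha$, so is coprime to the odd part of $m+1$ as well. Dirichlet then supplies a prime $p$ in this class, and Theorem \ref{thrm2} yields $2^kp \in \mathcal{P}(-1,m)$ for some $k \geq \alpha \geq \beta$.

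Finally, applying Theorem \ref{thrm1} to this base example produces a distinct $\{-1,m\}$-perfect number $2^{k+\alpha'}p$ for every valid $\alpha'$, and these are pairwise distinct because their $2$-adic valuations differ. The only potential obstacle is the appeal to Dirichlet's theorem, but the coprimality check is an immediate consequence of the congruence defining $\alpha$, so no deeper input is needed beyond Dirichlet itself.
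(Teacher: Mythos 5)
Your proposal is correct and follows essentially the same route as the paper: verify that $2(2^{\alpha+1}-1)-1$ is odd and congruent to $1$ modulo $(m+1)/2^\beta$, hence coprime to $2(m+1)$, invoke Dirichlet to get a prime in that class, and feed it into Theorem \ref{thrm2} (with Theorem \ref{thrm1} supplying the infinite family from a single base example). You spell out the bootstrapping and the existence of a valid $\alpha$ via the multiplicative order more explicitly than the paper does, but the argument is the same.
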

\begin{proof}
With $\alpha$, $\beta$ as in Theorem \ref{thrm2}, we have $2(2^{\alpha+1}-1)-1 \equiv \modd{1} {(m+1)/2^\beta}$; then since $2(2^{\alpha+1}-1)-1$ is odd, it follows also that $\text{gcd}(2(2^{\alpha+1}-1)-1,2(m+1))=1$, so there do in fact exist primes $p \equiv \modd{2(2^{\alpha+1}-1)-1} {2(m+1)}$.
\end{proof}

\section{The $\{-1,1\}$-perfect numbers} \label{sec3}
In the previous section, we obtained a characterization of the $\{-1,m\}$-perfect numbers $(m \geq 1)$ of the form $2^kp$ with $p$ an odd prime. When $m=1$, this can be extended to a characterization of all even $\{-1,1\}$-perfect numbers. The $\{-1,1\}$-perfect numbers have a certain aesthetic appeal owing to the formal similarity between the sum involved in a $\{-1,1\}$-presentation and the divisor sum involved in the definition of perfect numbers.

We first refine slightly the relevant special case of Lemma \ref{lem2}.

\begin{lemma} \label{lem3}
If $n \in \mathbb{Z}$, then $n=1+\sum_{j=1}^{k}\lambda_j \cdot 2^j$ for some $k \geq 1$ and $\lambda_1, \ldots, \lambda_k \in \{-1,1\}$ if and only if $n \equiv \modd{3} {4}$.
\end{lemma}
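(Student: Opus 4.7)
The plan is to invoke Lemma \ref{lem2} directly with the parameters $s=1$ and $m=1$, which governs exactly the sums $\sum_{j=1}^{k}\lambda_j \cdot 2^j$ with $\lambda_j \in \{-1,1\}$; the only work is to shift by the fixed summand $1$ and to note that the interval hypothesis in Lemma \ref{lem2} becomes vacuous once $k$ is large enough.

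For the forward direction I would reduce modulo $4$. Since $2^j \equiv 0 \pmod 4$ for all $j \geq 2$, we get $\sum_{j=1}^{k}\lambda_j \cdot 2^j \equiv 2\lambda_1 \pmod 4$, and $2\lambda_1 \in \{-2,2\}$, both of which reduce to $2 \pmod 4$. Hence $n = 1 + \sum_{j=1}^{k}\lambda_j \cdot 2^j \equiv 3 \pmod 4$. (Equivalently, Lemma \ref{lem2} with $s=1$, $m=1$ gives the modulus $2(m+1)=4$ and the residue $-2(2^{k}-1) \equiv 2 \pmod 4$, so $n-1 \equiv 2 \pmod 4$.)

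For the converse, suppose $n \equiv 3 \pmod 4$, so that $n-1 \equiv 2 \pmod 4$. Applying Lemma \ref{lem2} with $s=1$, $m=1$, the numbers of the form $\sum_{j=1}^{k}\lambda_j \cdot 2^j$ with $\lambda_j \in \{-1,1\}$ are precisely the integers $\equiv 2 \pmod 4$ lying in the interval $[-2(2^{k}-1),\, 2(2^{k}-1)]$. I would choose $k \geq 1$ large enough so that $|n-1| \leq 2(2^{k}-1)$; then both the congruence and the interval hypothesis of Lemma \ref{lem2} are satisfied, so the desired coefficients $\lambda_1,\ldots,\lambda_k \in \{-1,1\}$ with $n-1 = \sum_{j=1}^{k}\lambda_j \cdot 2^j$ exist.

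There is no genuine obstacle here: the lemma is a clean restatement of the $s=1$, $m=1$ case of Lemma \ref{lem2} with the initial $1$ absorbed and the interval bound removed (by the freedom to enlarge $k$). The only mild point to check is the congruence computation $-2(2^{k}-1) \equiv 2 \pmod 4$, which is immediate.
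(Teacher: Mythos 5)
Your proposal is correct and is essentially the paper's own proof: the paper likewise chooses $k$ so that $-2(2^k-1)\leq n-1\leq 2(2^k-1)$ and then applies Lemma \ref{lem2} with $m=1$, $s=1$, $t=k$. Your explicit check that $-2(2^k-1)\equiv 2\ (\mathrm{mod}\ 4)$ and the direct mod-$4$ computation for the forward direction are just the unpacked details of that same application.
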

\begin{proof}
Choose $k \geq 1$ such that $-2(2^k-1) \leq n-1 \leq 2(2^k-1)$; then Lemma \ref{lem2} applies ($m=1$, $s=1$, $t=k$).
\end{proof}

\begin{lemma} \label{lem4}
Suppose $n \in \mathbb{Z}$ and let $p$ be prime, with $p$ not dividing $n$. Then $(1)$ if $n \in \mathcal{P}(-1,1)$ , then also  $np^k \in \mathcal{P}(-1,1)$ for all $k \geq 1$, and $(2)$ if $np \in \mathcal{P}(-1,1)$, then also $np^{2k-1} \in \mathcal{P}(-1,1)$ for all $k \geq 1$.
\end{lemma}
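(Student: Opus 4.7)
My plan is to prove both parts by induction on $k$, using the identities
\[
p - 1 = (p - n) + (n - 1) \qquad \text{and} \qquad p^2 - 1 = (p^2 - np) + (np - 1),
\]
which let the given $\{-1, 1\}$-presentation of $n$ (respectively $np$) absorb the constant left over after multiplying by $p$ (respectively $p^2$).

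For part (1), write the given presentation as $n = 1 + B_n$, with $B_n$ a $\pm 1$-sum over the proper divisors of $n$. I induct on $k \geq 1$ from the base case $np^0 = n$: assuming a presentation $np^{k-1} = 1 + C$, I multiply by $p$ and rewrite
\[
np^k = p + pC = 1 + (p - n) + (n - 1) + pC = 1 + 1 \cdot p + (-1) \cdot n + B_n + pC.
\]
Since $p \nmid n$, the divisors appearing in $pC$ all have positive $p$-adic valuation (and are distinct from $p$ and from $np^k$ by the inductive hypothesis), while $n$ and the divisors in $B_n$ have $p$-adic valuation $0$, so the two blocks are disjoint; the added $+p$ fills the valuation-$1$ ``gap'' at $p$, and a count verifies that every proper divisor of $np^k$ appears exactly once with a coefficient in $\{-1, 1\}$.

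For part (2), write $np = 1 + B_1$ and induct on $k \geq 1$ (base case $np^{2k-1} = np$ trivial). Assuming $np^{2k-3} = 1 + C$ for $k \geq 2$, I multiply by $p^2$ and rewrite
\[
np^{2k-1} = p^2 + p^2 C = 1 + 1 \cdot p^2 + (-1) \cdot np + B_1 + p^2 C
\]
by the analogous identity. The same bookkeeping applies: terms in $p^2 C$ have $p$-adic valuation at least $2$ (and miss $p^2$ and $np^{2k-1}$), while $np$ and the terms in $B_1$ have valuation at most $1$, so the two blocks are disjoint and the added $+p^2$ fills the valuation-$2$ gap.

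The only real work is the distinctness/exhaustion bookkeeping, which reduces to the clean $p$-adic valuation separation afforded by $p \nmid n$, together with a simple count matching $(\tau(n))(k+1) - 2$ (respectively $(\tau(n))(2k) - 2$) against the number of summands. No step looks particularly delicate; the only cosmetic subtlety is that at the first induction step the inductive hypothesis and the identity's presentation coincide (so $C = B_n$ or $C = B_1$), but the valuation argument still separates the two copies into disjoint blocks of divisors.
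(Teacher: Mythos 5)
Your proof is correct and rests on the same idea as the paper's, with the roles of the two presentations interchanged: the paper forms $np^{k+1}=\Sigma_2 - np^k + p^{k+1}\Sigma_1$, appending $-np^k$ to the presentation $\Sigma_2$ of $np^k$ and pasting the presentation $\Sigma_1$ of $n$ (resp.\ $np$), scaled by $p^{k+1}$, onto the top valuation layer, whereas you append $-n$ (resp.\ $-np$) to the base presentation and scale the inductive presentation by $p$ (resp.\ $p^2$) to fill the layers of positive valuation. Both versions are the same $p$-adic valuation splicing, and the disjointness/exhaustion bookkeeping you spell out is exactly what the paper leaves implicit.
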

\begin{proof}
If $(1)$ $n = \sum{}_1$ and $np^k = \sum{}_2$ $(k \geq 0)$ are presentations of $n$ and $np^k$ respectively, then $np^{k+1}=\sum{}_2 - np^k +p^{k+1}\sum{}_1$ is a presentation of $np^{k+1}$. Similarly, if $(2)$ $np = \sum{}_1$ and $np^k = \sum{}_2$ $(k \geq 1 )$ are presentations of $np$ and $np^k$ respectively, then $np^{k+2}=\sum{}_2 - np^k +p^{k+1}\sum{}_1$ is a presentation of $np^{k+2}$.
\end{proof}
\begin{lemma} \label{lem5}
If $n \in \mathcal{P}(-1,1)$, then also $2n \in \mathcal{P}(-1,1)$.
\end{lemma}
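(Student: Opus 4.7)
The plan is to construct a $\{-1,1\}$-presentation of $2n$ explicitly from a given presentation $n = 1 + \sum_d \lambda_d d$, splitting into cases by the parity of $n$. Write $n = 2^a m$ with $m$ odd.

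First I would dispatch the odd case $a = 0$ as a warm-up. Since $n$ is odd, the proper nontrivial divisors of $2n$ are exactly $\{d : d \mid n,\ d > 1\} \cup \{2\} \cup \{2d : d \mid n,\ 1 < d < n\}$, all distinct. Setting $\mu_2 = 1$, $\mu_n = -1$, and $\mu_d = \mu_{2d} = \lambda_d$ for each proper nontrivial divisor $d$ of $n$, the sum evaluates to $2 - n + 3\sum_d \lambda_d d = 2 - n + 3(n-1) = 2n - 1$, as needed.

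For the main case $a \geq 1$, I would partition the proper nontrivial divisors of $2n$ by $2$-adic valuation into four classes: (a) those with $\nu_2(d) < a$ (these coincide with the proper nontrivial divisors of $n$ at the same levels); (b) the divisors $2^a e$ with $e \mid m$ and $e < m$; (c) the divisor $n$ itself; and (d) the new divisors $2^{a+1} e$ with $e \mid m$ and $e < m$. Define $\mu_d = \lambda_d$ on (a), $\mu_d = -\lambda_d$ on (b), $\mu_n = +1$ on (c), and $\mu_{2^{a+1} e} = \lambda_{2^a e}$ on (d); the last is well-defined because $a \geq 1$ together with $1 \leq e < m$ forces $2^a e$ to be a proper nontrivial divisor of $n$. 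Writing $T_a := \sum_{e \mid m,\, e < m} \lambda_{2^a e} \cdot 2^a e$ for the level-$a$ contribution in the presentation of $n$, the contributions of (b) and (d) are $-T_a$ and $2T_a$, which sum to $+T_a$, reproducing the original level-$a$ contribution. Combined with $+n$ from (c) and the unchanged terms from (a), the total becomes $(n - 1) + n = 2n - 1$.

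The routine bookkeeping reduces to checking that (a)--(d) genuinely partition the proper nontrivial divisors of $2n$, which is immediate from the $2$-adic decomposition. The main conceptual step is the paired sign flip on level $a$ together with $\mu_{2^{a+1} e} = \lambda_{2^a e}$ at level $a+1$: this ``promotes'' the level-$a$ contribution up one power of $2$ while leaving its original value intact, which exactly absorbs the new $+n$ without needing any subset-sum identity on the proper divisors of $m$. A naive alternative that keeps all level-$\leq a$ coefficients unchanged does require such an identity and fails, for instance when $m = 15$.
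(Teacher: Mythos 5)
Your proof is correct and is essentially the paper's own argument: in the even case the paper performs exactly your paired sign flip, replacing $\lambda_j d_j$ by $-\lambda_j d_j + \lambda_j(2d_j)$ for each divisor $d_j$ of maximal $2$-adic valuation so as to absorb the extra $+n$ term, which is your (b)/(d) bookkeeping. The only cosmetic difference is that for odd $n$ the paper simply invokes Lemma \ref{lem4} with $p=2$, whereas you construct the presentation of $2n$ directly.
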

\begin{proof}
If $n$ is odd, this follows from Lemma \ref{lem4}. Suppose $n$ is even and $n=1+\sum_{j=1}^{k}\lambda_{j}d_j$ is a presentation of $n$. Then $2n=1+\sum_{j=1}^{k}\lambda_{j}d_j+n$. The proper divisors of $2n$ missing from this sum have the form $2d_j$ for some divisor $d_j$ of $n$ with $1<d_j<n$ (since $n$ is even). Replace all such $\lambda_{j}d_{j}$ in the sum with $-\lambda_{j}d_j+\lambda_{j}(2d_{j})$ to obtain a presentation of $2n$.
\end{proof}

\begin{theorem} \label{thrm3}
If $d \geq 1$ is odd and not a square, then $2^k d \in \mathcal{P}(-1,1)$ for all but finitely many $k \geq 1$. Conversely if $2^k d \in \mathcal{P}(-1,1)$ for some $k \geq 0, d \geq 1$, then $d$ is not square.
\end{theorem}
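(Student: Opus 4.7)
I tackle the two directions separately. For the converse, I start from a presentation $2^k d = 1 + \sum_j \lambda_j d_j$ with $d$ odd and each $\lambda_j \in \{-1, 1\}$, and reduce modulo $2$. Every $\lambda_j$ becomes $1$, so the parity of the right-hand side is $1$ plus the number of odd divisors $d_j$ in the sum. Those odd $d_j$ are precisely the divisors of $d$ other than $1$---and also other than $d$ itself when $k = 0$, since then $d = n$ is excluded from the sum. A short case split on $k = 0$ versus $k \geq 1$ then forces $\tau(d)$ to be even, i.e., $d$ is not a square.

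For the forward direction, Lemma \ref{lem5} reduces the task to producing a single $K \geq 1$ with $2^K d \in \mathcal{P}(-1,1)$, since then all larger $k$ work. Write $d = p_1^{a_1} \cdots p_r^{a_r}$ with distinct odd primes $p_i$ and reorder so that $a_1$ is odd; this is possible because $d$ is not a square. The plan then has three moves: (i) establish the base case $2^K p_1 \in \mathcal{P}(-1,1)$ for all sufficiently large $K$; (ii) apply Lemma \ref{lem4}(2) with $n = 2^K$ and prime $p_1$ (using that $a_1$ is odd) to get $2^K p_1^{a_1} \in \mathcal{P}(-1,1)$; (iii) apply Lemma \ref{lem4}(1) iteratively with each of $p_2, \ldots, p_r$ (none of which divides the current number) to introduce the remaining factors $p_2^{a_2}, \ldots, p_r^{a_r}$, reaching $2^K d \in \mathcal{P}(-1,1)$.

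The main obstacle is the base case. A $\{-1,1\}$-presentation of $n = 2^K p$ amounts to choosing a subset $B$ of the proper divisors $>1$ with $\sum B = (\sigma(n) - 2n)/2 = 2^K - (p+1)/2$; the complementary subset then carries the $+1$ coefficients. The available divisors fall into two stripes, $\{2, 4, \ldots, 2^K\}$ and $\{p, 2p, \ldots, 2^{K-1} p\}$, whose subset sums are the even integers in $[0, 2^{K+1} - 2]$ and the multiples $jp$ with $0 \leq j \leq 2^K - 1$, respectively. A case split on $p \bmod 4$ finishes the argument: if $p \equiv 3 \pmod 4$, the target $2^K - (p+1)/2$ is already even and, for $K$ large, in range, so a subset of $\{2, 4, \ldots, 2^K\}$ alone realizes it; if $p \equiv 1 \pmod 4$, including a single copy of $p$ in $B$ leaves the even remainder $2^K - (3p+1)/2$, again realizable as a subset sum of powers of $2$ for $K$ large. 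The required $K$ depends only logarithmically on $p$, so the base case holds and the full theorem follows.
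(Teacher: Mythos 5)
Your proposal is correct and follows the same overall strategy as the paper's proof: reduce via Lemmas \ref{lem4} and \ref{lem5} to the single base case $2^K p$ for an odd prime $p$, split on $p \bmod 4$, and use a parity argument for the converse. The execution differs in two places, both valid. For the base case, the paper invokes Lemma \ref{lem3} to write $p$ (resp.\ $3p$) as a signed sum $1+\sum_j \lambda_j 2^j$ and then pads with the divisors $2^j p$; you instead recast $\{-1,1\}$-perfection as the subset-sum condition $\sum B = (\sigma(n)-2n)/2 = 2^K - (p+1)/2$ and realize the target with binary representations of even numbers, optionally spending one copy of $p$ to fix parity. The two arguments rest on the same combinatorial fact about $\pm$-sums of powers of $2$ (your unsigned subset-sum statement is exactly the content of Lemmas \ref{lem2} and \ref{lem3} in this special case), but your version is self-contained and makes the ``half the abundance'' reformulation explicit, which is a nice bonus. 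For the converse, the paper computes $\sigma(n)=\sum_j(1-\lambda_j)d_j+2n$ and cites the classical fact that $\sigma(n)$ is even iff $n$ is neither a square nor twice a square, whereas you reduce the presentation modulo $2$ and read off the parity of $\tau(d)$ directly, at the cost of a short case split on $k=0$ versus $k\geq 1$; this avoids the external fact entirely. I verified the details of both of your deviations (in particular that $(p+1)/2$ and $(3p+1)/2$ have the right parities in the respective congruence classes, and that Lemma \ref{lem4}(2) applies because $d$ nonsquare lets you choose a prime of odd exponent first), and they hold up.
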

\begin{proof}
In light of Lemmas \ref{lem4} and \ref{lem5}, it suffices to show that $2^kp \in \mathcal{P}(-1,1)$ for every odd prime $p$ for some $k \geq 1$. Choose (Lemma \ref{lem3}) $k \geq 1$ and $\lambda_1, \ldots, \lambda_k$ such that
\begin{equation} \nonumber
    1+\sum_{j=1}^{k}\lambda_{j}2^{j} = 
    \begin{cases}
    p, & \text{if } p \equiv \modd{3} {4} \\
    3p, & \text{if } p \equiv \modd{1} {4}
    \end{cases}.
\end{equation}
Then 
\begin{equation} \nonumber
    2^{k}p=1+\sum_{j=1}^{k}\lambda_{j}2^{j}+(-1)^{(p+1)/2}p+\sum_{j=1}^{k-1}2^{j}p
\end{equation}
is a presentation, as required.

Conversely, suppose $n > 1$ is $\{1,-1\}$-perfect with presentation $n=1+\sum_{j=1}^{k}\lambda_{j}d_{j}$. Then
\begin{equation} \nonumber
    \sigma(n) = \sum_{j=1}^{k}(1-\lambda_{j})d_{j}+2n
\end{equation} is even, since every $1-\lambda_{j}=0 \text{ or } 2$. But it is well known that $\sigma(n)$ is even if and only if $n$ is not square or twice a square.
\end{proof}
We conclude with a few further questions and conjectures concerning the $\{-1, 1\}$-perfect numbers. Recall that an abundant number is a positive integer $n$ satisfying $\sigma(n) \geq 2n$. Evidently, every positive $\{-1,1\}$-perfect number is abundant, but not every abundant number is $\{-1,1\}$-perfect; the first few abundant numbers that are not also $\{-1,1\}$-perfect are $18, 20, 36, 72, \ldots$. We conjecture that almost every abundant number is $\{-1,1\}$-perfect.
\begin{conjecture}
The positive $\{-1,1\}$-perfect numbers have a density equal to the density $\mathcal{A}$ of the abundant numbers, for which we have the bounds $0.2474 < \mathcal{A} < 0.2480$, obtained by Del\'eglise \cite{article3}.
\end{conjecture}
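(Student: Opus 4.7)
The plan is to reformulate $\{-1,1\}$-perfectness as a subset-sum problem on divisors, and then to argue that the abundant numbers failing this condition form a set of density zero. Since every $\{-1,1\}$-perfect number is abundant, the inequality $\mathrm{density}(\mathcal{P}(-1,1)) \leq \mathcal{A}$ is automatic, so only the reverse direction needs proof.

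First, observe that $n = 1 + \sum_{j=1}^k \lambda_j d_j$ with $\lambda_j \in \{-1,1\}$ precisely when some subset $S \subseteq \{d_1,\ldots,d_k\}$ satisfies $\sum_{d \in S} d = (\sigma(n)-2)/2$. The target $(\sigma(n)-2)/2$ is an integer iff $\sigma(n)$ is even, equivalently $n$ is neither a square nor twice a square (a density-zero exception, as noted in the proof of Theorem \ref{thrm3}); and it lies in $[0, \sigma(n)-1-n]$, the interval containing all possible subset sums of $\{d_1,\ldots,d_k\}$, precisely when $n$ is abundant. Thus the conjecture reduces to showing that, for all but a density-zero set of abundant $n$ with $\sigma(n)$ even, the subset sums of the nontrivial proper divisors actually hit the target $(\sigma(n)-2)/2$.

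Second, I would partition the abundant $n$ into \emph{structured} numbers (those with many distinct small prime factors, hence many divisors) and \emph{sparse} ones (primitive abundants of low complexity, such as $2^k p$). For structured $n$ the subset sums of the proper divisors should fill every integer in the feasibility window $[0, \sigma(n)-1-n]$: if several small primes $p < q < r$ divide $n$, then the divisor set generates subset sums dense in every short arithmetic progression, and the target is automatically achieved. For sparse $n$, Section \ref{sec2} already dispatches $n = 2^k p$ via Theorem \ref{thrm3}; the remaining low-complexity primitive abundants ought to form a density-zero subset by a direct sieve argument, since abundance forces nontrivial multiplicative structure.

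The main obstacle is the odd abundant numbers, which by Del\'eglise's estimates carry positive density but sit entirely outside the dyadic machinery of Lemmas \ref{lem2} and \ref{lem3}. The constructions of Section \ref{sec3} rely essentially on writing integers via $\{-1,1\}$-combinations of powers of $2$, and no comparable device is available over odd moduli. A separate argument is needed, presumably exploiting the fact that a density-one subset of odd abundant numbers already has several small odd prime divisors, giving the subset sums of the proper divisors enough flexibility to reach $(\sigma(n)-2)/2$. Making this rigorous appears to require delicate input on the joint distribution of $\sigma(n)/n$ and the small-divisor structure of $n$, in the spirit of Erd\H{o}s--Wintner; I do not see an immediate path to a complete proof, which is likely why the statement is presented only as a conjecture.
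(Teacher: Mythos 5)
The statement you are addressing is presented in the paper as a \emph{conjecture}: the paper offers no proof of it (only the observation that every positive $\{-1,1\}$-perfect number is abundant, together with computational evidence), so there is no proof of the paper's to compare yours against. Your proposal is, by your own admission, a strategy sketch rather than a proof, and that self-assessment is accurate. Your opening reformulation is correct and worth having explicitly: with $\Sigma=\sigma(n)-1-n$ the sum of the nontrivial proper divisors, a presentation $n=1+\sum_j\lambda_j d_j$ with $\lambda_j\in\{-1,1\}$ exists if and only if some subset of $\{d_1,\ldots,d_k\}$ sums to $(\sigma(n)-2)/2$, and this target is a nonnegative integer at most $\Sigma$ exactly when $\sigma(n)$ is even and $n$ is abundant or perfect. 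This cleanly explains why $\mathcal{P}(-1,1)$ sits inside the abundant numbers up to a density-zero exceptional set, and reduces the conjecture to a subset-sum covering problem.

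The gaps are in the covering step. First, the assertion that for $n$ with several small prime factors the subset sums of the proper divisors fill the whole window $[0,\Sigma]$ is false as stated: no nonempty subset sum lies strictly below $d_1$, and the paper's own example $n=20$ (divisors $2,4,5,10$, target $20$, total $21$) is an abundant number with two small prime factors whose target is missed. What is actually needed is that the subset sums cover a neighborhood of the specific target $(\sigma(n)-2)/2$, which is a practical-number/Zumkeller-type density condition on the divisor set; establishing this for almost all abundant $n$ is precisely the hard content of the conjecture and is not supplied. Second, the structured/sparse dichotomy is not made precise, and it is not shown that the sparse class has density zero within the abundant numbers (the problematic $n$ need not be primitive abundant, so Erd\H{o}s's density-zero theorem for primitive abundants does not immediately dispose of them). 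Third, the conjecture asserts equality of densities, so one must also show that the density of $\mathcal{P}(-1,1)$ exists, which the sketch does not address. You correctly identify the odd abundant numbers as lying outside the reach of the dyadic machinery of Lemmas \ref{lem2} and \ref{lem3}; that difficulty, together with the covering problem above, is why the statement remains a conjecture in the paper.
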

The smallest odd abundant number is $945$, which is also $\{-1,1\}$-perfect, as is every other nonsquare odd abundant number that we have been able to check by computer verification. On the other hand, not every odd abundant number is $\{-1,1\}$-perfect: If $n \in \mathbb{Z}$ is odd and abundant, then also $n^2$ is odd and abundant, since the set of abundant numbers is closed under multiplication;  but Theorem \ref{thrm3} shows that $n^2$ cannot be $\{-1,1\}$-perfect. So for example, $945^2=893025$ is odd and abundant, but not $\{-1,1\}$-perfect. We propose the following conjecture.

\begin{conjecture}
Every nonsquare odd abundant number is $\{-1,1\}$-perfect.
\end{conjecture}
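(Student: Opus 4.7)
The first move is to reformulate the conjecture as a subset-sum problem. A $\{-1,1\}$-presentation $n=1+\sum_{j=1}^k \lambda_j d_j$ with proper divisors $1 < d_1 < \cdots < d_k < n$ corresponds exactly to a partition of $\{d_1, \ldots, d_k\}$ into subsets $P$ (where $\lambda_j = 1$) and $N$ (where $\lambda_j = -1$) with $\sum P - \sum N = n-1$. Since $\sum_{j=1}^k d_j = \sigma(n) - n - 1$, this is equivalent to $\sum_{d \in N} d = \tfrac{\sigma(n)}{2} - n$. Call this target $T(n)$. For $n$ odd and nonsquare, $\sigma(n)$ is even, so $T(n) \in \mathbb{Z}$; abundance gives $T(n) \geq 0$; and $T(n) \leq \sigma(n) - n - 1$ for $n > 1$. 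So the conjecture reduces to: \emph{the value $T(n)$ is realizable as a subset sum of $\{d : d \mid n,\ 1 < d < n\}$.}

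With this reformulation in hand, I would pursue two complementary lines of attack. The first is a \emph{reduction argument}: use lifting lemmas in the spirit of Lemma \ref{lem4} to propagate $\{-1,1\}$-perfection from a small ``base'' odd abundant number (such as $945$) to its odd nonsquare multiples. Concretely, if $n = m \cdot k$ with $\gcd(m,k)=1$ and $m$ already $\{-1,1\}$-perfect, one iterates Lemma \ref{lem4}(1) on each prime power in $k$. The gap this leaves is the set of odd nonsquare abundants not divisible by any known $\{-1,1\}$-perfect base; for those, one would combine Lemma \ref{lem4}(2) (which covers $mp^{2k-1}$ given $mp$) with a structural case analysis on the primitive abundant divisors of $n$.

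The second line is a \emph{density argument on subset sums}. For odd abundant $n$ divisible by a small highly composite factor $f$ (e.g., $f = 3 \cdot 5$ or $3 \cdot 5 \cdot 7$), the divisors $\{d_j f / f\} = \{d_j\}$ contain a practical-like subfamily whose subset sums form a long interval $[0, M]$ for some $M$ well exceeding $T(n)$. I would try to show that once $n$ has at least three distinct odd prime factors and is abundant, its proper divisors $> 1$ contain a subset whose sums cover every integer in $[0, T(n)]$ in a controlled way. This reduces to checking a statement of the form ``the set of subset sums of divisors of $n$ contains every residue in a suitable arithmetic progression, with gaps bounded by the smallest divisor.''

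The main obstacle will be the absence of any ``small-step'' divisor analogous to $2$ in the even case: for odd $n$ every divisor is odd, so subset sums advance by at least $3$ at a time and their parity is constrained by subset size. This is precisely why $n^2$ is odd abundant but fails to be $\{-1,1\}$-perfect, and the conjecture is asserting that the square case is the \emph{only} obstruction. Ruling out all other obstructions seems to require a nontrivial analysis of \emph{primitive} odd abundant numbers, whose divisor sets may be sparse; here a complete classification (or a computer-assisted verification up to a provable bound combined with an asymptotic density argument) appears to be the most plausible finishing step.
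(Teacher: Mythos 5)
The statement you are addressing is an open conjecture in the paper; the paper offers no proof of it, only the computational observation that every nonsquare odd abundant number checked so far is $\{-1,1\}$-perfect, together with Theorem \ref{thrm3}, which shows that the nonsquare hypothesis is necessary. Your opening reformulation is correct and is worth keeping: writing $P$ and $N$ for the sets of proper divisors $d_j>1$ receiving $\lambda_j=1$ and $\lambda_j=-1$ respectively, the two identities $\sum P-\sum N=n-1$ and $\sum P+\sum N=\sigma(n)-n-1$ give $\sum N=\sigma(n)/2-n$, so the conjecture is exactly the assertion that $T(n)=\sigma(n)/2-n$ is realizable as a subset sum of the divisors $d$ with $1<d<n$; the hypotheses ``$n$ nonsquare odd'' and ``$n$ abundant'' are precisely what make $T(n)$ a nonnegative integer in the admissible range. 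This is the same arithmetic that drives the parity argument in the converse half of Theorem \ref{thrm3}.

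What follows the reformulation, however, is a research programme rather than a proof, and both branches have concrete holes that you flag but do not close. The lifting strategy via Lemma \ref{lem4} can only propagate $\{-1,1\}$-perfection from numbers already known to possess it, and no finite stock of bases can suffice: since $\prod_{p>k}p/(p-1)$ diverges over odd primes, there exist (squarefree, hence nonsquare) odd abundant numbers all of whose prime factors exceed any prescribed bound, so infinitely many odd abundant numbers are not coprime multiples of $945$ or of any fixed finite list of verified examples; Lemma \ref{lem4}(2) does not help create new bases because its hypothesis already assumes $np\in\mathcal{P}(-1,1)$. The subset-sum covering strategy runs into the obstruction you yourself name --- every divisor of an odd $n$ is odd, so there is no analogue of the chain $1,2,4,\ldots$ that makes the even case (Lemma \ref{lem3}) work, odd numbers are never practical, and the claimed interval-covering property of the divisor set is asserted, not proved. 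Finally, the proposed finishing step (computer verification up to a bound plus an asymptotic density argument) is structurally incapable of establishing a universally quantified statement: density arguments yield ``almost all,'' which is the content of the paper's preceding density conjecture, not of this one. In short, the reformulation is a genuine and useful observation, but no proof is present, and the statement should be regarded as remaining open.
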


\end{document}